\documentclass[12pt]{amsart}
\usepackage[totalwidth=510pt, totalheight=640pt]{geometry}
\usepackage{amsmath, amssymb, amsthm, amsfonts, 
	eucal, enumerate, natbib, layout}
\usepackage[normalem]{ulem}
\usepackage[all,tips,2cell]{xy}
\usepackage[usenames]{color}
\usepackage{verbatim}
\usepackage{soul}
\usepackage{mathtools}
\usepackage{hyperref}
\hypersetup{colorlinks=true,citecolor={red},linkcolor={blue},urlcolor={violet}}
\usepackage{graphicx}

\newcommand{\details}[1]{}

\newcommand{\hookdownarrow}{\mathrel{\rotatebox[origin=c]{-90}{$\hookrightarrow$}}}

\newtheorem{theorem}{Theorem}[section]
\newtheorem*{theorem*}{Theorem}

\newtheorem*{corollary*}{Corollary}

\newtheorem*{lemma*}{Lemma}

\newtheorem*{claim*}{Claim}

\newtheorem{proposition}[theorem]{Proposition}
\newtheorem*{proposition*}{Proposition}

\newtheorem*{conjecture*}{Conjecture}
\newtheorem{def-proposition}[theorem]{Definition-Proposition}
\theoremstyle{definition}
\newtheorem{definition}[theorem]{Definition}
\newtheorem*{definition*}{Definition}
\newtheorem{remark}[theorem]{Remark}

\newtheorem*{example*}{Example}
\numberwithin{equation}{section}

\newcommand{\pn}{\noindent}

\newcommand{\ZZ}{\mathbb{Z}}
\newcommand{\QQ}{\mathbb{Q}}
\newcommand{\RR}{\mathbb{R}}
\newcommand{\CC}{\mathbb{C}}
\newcommand{\GG}{\mathbb{G}}

\newcommand{\Hom}{\mathrm{Hom}}
\newcommand{\longhookrightarrow}{\lhook\joinrel\longrightarrow}

\newcommand{\MT}{\mathrm{MT}}
\newcommand{\W}{\mathrm{W}}
\newcommand{\F}{\mathrm{F}}
\newcommand{\T}{\mathrm{T}}
\newcommand{\HH}{\mathrm{H}}
\newcommand{\R}{\mathrm{R}}

\newcommand{\im}{\mathrm{Im}}

\newcommand{\ii}{\mathrm{i}}
\newcommand{\Gr}{\mathrm{Gr}}

\newcommand{\li}{\mathrm{Lie}\,}

\newcommand{\Ext}{\mathrm{Ext}}
\newcommand{\uHom}{\underline{\mathrm{Hom}}}
\newcommand{\uExt}{\underline{\mathrm{Ext}}}

\newcommand{\V}{\mathbf{V}}
\newcommand{\h}{\mathrm{H}}

\newcommand{\cO}{\mathcal{O}}

\newcommand{\ad}{\mathrm{ad}}
\newcommand{\an}{\mathrm{an}}

\newcommand{\locsys}{\mathrm{Loc.Sys}(S)}
\newcommand{\spec}{{\mathrm{Spec}}\,}

\newcommand{\vmhs}{\mathrm{VMHS}}

\newcommand{\mot}{\mathrm{Mot}}

\begin{document}

\title[1-motives and admissible variations of mixed Hodge structures]
{1-motives and admissible variations of mixed Hodge structures}

\author{Cristiana Bertolin}
\address{Dipartimento di Matematica, Universit\`a di Padova, Via Trieste 63, Padova}
\email{cristiana.bertolin@unipd.it}

\subjclass[2020]{14D07, 14C30, 14C15, 14K30} 

\keywords{1-motives over a scheme, admissible variations of mixed Hodge structures, the enriched Hodge realization}

\date{\today}


\begin{abstract}

	Let $S$ be a connected scheme smooth and of finite type over $\CC$. To every 1-motive over $S$, Andr\'e associated the enriched Hodge realization given by a torsion-free, graded-polarizable and admissible variation of mixed Hodge structures of type $(0,0)$, $(-1,0)$, $(0,-1)$, $(-1,-1)$ over the associated complex analytic space $S^\an$. In this paper, we prove that every admissible variation of mixed Hodge structures of the above type arises, up to isogeny, from a 1-motive over $S$, thereby providing a positive answer to a question of Andr\'e \cite[Question 3.2.3]{A25} concerning the geometric origin of such variations. More precisely, we establish a Hodge-theoretic interpretation of sections of semi-abelian varieties by combining Andr\'e's description of the abelian case with a new analysis of the toric part. 
	
	As a consequence, we prove a relative analogue of Deligne’s equivalence over \(\mathbb C\) \cite[Construction (10.1.3)]{D1}. Namely, under suitable assumptions on \(S\) and on the lattices and the tori underlying 1-motives, the enriched Hodge realization functor induces an equivalence between the category of \(1\)-motives over \(S\) and the category of torsion-free, graded-polarizable and admissible variations of mixed Hodge structures over \(S^{an}\) of type \((0,0)\), \((-1,0)\), \((0,-1)\), \((-1,-1)\). In general, the corresponding statement holds only up to isogeny.

	Finally, we introduce the global Mumford--Tate group of a 1-motive over $S$ and show that its neutral connected component identifies with the Mumford--Tate group of the generic fiber.

\end{abstract}


\maketitle


\tableofcontents

\section*{Introduction}

Let $S$ be a connected scheme smooth and of finite type over $\CC,$ and denote by $S^\an$ the associated complex analytic space. 

 In \cite[Construction (10.1.3)]{D1} Deligne proved that there is an equivalence between 
 \begin{enumerate}
 	\item the category of 1-motives over $\CC$, and 
 	\item the category of torsion-free and polarizable variations of mixed Hodge structures of type (0,0), (-1,0), (0,-1), (-1,-1).
 \end{enumerate}

In the relative setting over $S$, the natural Hodge-theoretic objects are admissible variations of mixed Hodge structures. In particular, to every 1-motive $M$ over $S$, André associated in \cite[Lemma 5]{A25} a torsion-free, graded-polarizable and \emph{admissible} variation of mixed Hodge structures $\T_{\h}(M)$ of type (0,0), (-1,0), (0,-1), (-1,-1) over $S^\an,$ called its enriched Hodge realization. Note that the admissibility condition appears only in the mixed and relative setting: it is absent both in the case over $\CC$ (see Deligne's equivalence) and in the pure case (indeed, by \cite[Rappel (4.4.3)]{D0}, the category of abelian schemes over $S$ is equivalent to the category of torsion-free and graded-polarizable variations of mixed Hodge structures over $S^\an$ of type (-1,0), (0,-1)).

The main goal of this paper is to establish an analogue of Deligne's equivalence in the relative setting, namely for 1-motives over $S$.

In this paper we assume that extensions of abelian $S$-schemes by $S$-tori have no fixed part, even after pull-back to a finite étale
covering of $S.$

Using the Graber--Starr theorem on sections of an abelian scheme $A$ to reduce to the one-dimensional case treated by Zucker, Andr\'e proved in \cite[Theorem 3.2.1]{A25} the following Hodge-theoretic interpretation of sections of abelian schemes: any torsion-free, graded-polarizable and admissible variation of mixed Hodge structures over $S^{\an}$ of type $(0,0)$, $(-1,0)$, $(0,-1)$ arises, up to isogeny, from a 1-motive $[v : \mathbb{Z} \to A] $
over $S$ without toric part. By Cartier duality, Andr\'e's result implies that any torsion-free, graded-polarizable and admissible variation of mixed Hodge structures over $S^{\an}$ of type $(-1,0)$, $(0,-1)$, $(-1,-1)$ arises, up to isogeny, from a 1-motive $
[0 \to G] $
over $S$ without lattice (in fact, the Cartier dual of the semi-abelian variety $G$ is $
[\hat v : \mathbb{Z} \to \hat A],$
where $\hat A$ is the Cartier dual of the abelian scheme $A$). In loc.\ cit., Question 3.2.3, he asked whether any torsion-free, graded-polarizable and admissible variation of mixed Hodge structures over $S^{\an}$ of type $(0,0)$, $(-1,0)$, $(0,-1)$, $(-1,-1)$ arises, up to isogeny, from a 1-motive over $S$.

In Proposition \ref{prop:AndreG_m}, we show that any torsion-free, graded-polarizable and admissible variation of mixed Hodge structures over $S$ of type $(0,0)$, $(-1,-1)$ arises, up to isogeny, from a 1-motive
$[\mathbb{Z} \to \mathbb{G}_m]$
over $S$ without abelian part. Our proof, dealing with the toric case, differs substantially from Andr\'e's argument in the abelian case.

Let $\mathcal P$ be the Poincar\'e biextension of $(A,\hat A)$ by $\mathbb{G}_m$. Having a 1-motive
$M=[u:\mathbb{Z}\to G]$
is equivalent to having the two morphisms of group schemes
$v:\mathbb{Z}\to A $ and $\hat v:\mathbb{Z}\to \hat A$
and a trivialization $
\psi:\mathbb{Z}\times\mathbb{Z}\to (v\times \hat v)^*\mathcal P$
of the pull-back of the Poincar\'e biextension via $v\times \hat v$. Hence, combining Andr\'e's result \cite[Theorem 3.2.1]{A25} with our Proposition \ref{prop:AndreG_m}, we prove in Theorem \ref{Teo:AndreG} a Hodge-theoretic interpretation of sections of semi-abelian varieties,
thereby providing a positive answer to Andr\'e's question \cite[Question 3.2.3]{A25}.

Theorem \ref{Teo:AndreG} leads to the following relative analogue of Deligne’s equivalence over $\mathbb{C}$, established in Theorem \ref{VMHS1mot}: namely, if either

(a) $S$ is proper or contained in a proper variety with boundary of codimension $>1$,  or

(b) $S$ has a compactification with boundary with normal crossings and the local monodromies of $(\V_\QQ, \W_*,\F^*)$ at the boundary are unipotent,

\par\noindent the enriched Hodge realization functor $M \longmapsto \mathrm{T}_{\mathrm H}(M)$
is an equivalence of categories between
	
	\begin{enumerate}
		\item the category of 1-motives over $S$, whose lattices are globally constant for the étale topology, and whose tori are split,
		\item the category of torsion-free, graded-polarizable and admissible variations of mixed Hodge structures $(\V_\ZZ,\W_*,\F^*)$ over $S^{\an}$ of type	(0,0),\,(-1,0),\,(0,-1),\,(-1,-1), such that $ \operatorname{Gr}^W_0(\V_{\mathbb Z})$ is a globally constant lattice and $ \operatorname{Gr}^W_{-2}(\V_{\mathbb Z}) $ is a globally constant variation of Hodge structures isomorphic to a direct sum of copies of $ \mathbb Z(1) .$
	\end{enumerate}

	\par\noindent In general, this equivalence holds only up to isogeny. 
	
Finally we introduce the notion of global Mumford-Tate group $\mathrm{G}_{\vmhs^\ad (S)}(M)$ of a 1-motive $M$ over $S$ and using the theorem of the fixed part, we prove that the neutral connected component of $\mathrm{G}_{\vmhs^\ad (S)}(M)$ identifies with the Mumford-Tate group of the fiber $M_{s_0}$ at the generic point $s_0 $ of $ S$ (see Proposition \ref{MT}).

\section*{Acknowledgement}

The author thanks Yves André for bringing \cite[Theorem 3.2.1 and Question 3.2.3]{A25} to her attention and for his comments which greatly improved this paper.

\section*{Notation}

Let $S$ be an arbitrary scheme. A smooth 1-motive (simply called 1-motive) $M=(X,A,T,G,u)$ over $S$ consists of
\begin{itemize}
	\item  an $S$-group scheme $X$ which is locally for the \'etale
	topology a constant group scheme defined by a finitely generated free
	$\ZZ \,$-module,
	\item an extension $G$ of an abelian $S$-scheme $A$ by an $S$-torus $T$ with cocharacter group $X_*(T)={\uHom}({\GG}_m,T)$ and character group $X^*(T)={\uHom}(T,{\GG}_m)$,
	\item a morphism $u:X \rightarrow G$ of $S$-group schemes.
\end{itemize}

The weight filtration $W_*$ on $M $ is ${\W}_{i}(M) = M$ for $i \geq 0$, ${\W}_{-1}(M)=G, {\W}_{-2}(M)= T$ and ${\W}_{j}(M) = 0$ for $j \leq -3$. The quotients ${\rm Gr}_{i}^{{\W}}= {\W}_i / {\W}_{i-1}$ for $i=0,-1,-2$ are $X,A$ and $T$ respectively.

A 1-motive $M=(X,A,T,G,u)$ can be viewed also as a length 1 complex $[u: X \to G]$ of abelian sheaves for the \emph{fppf} topology concentrated in degrees -1 and 0. Let $M_1=[u_1:X_1 \to G_1]$ and $M_2=[u_2 : X_2 \to G_2]$ be two 1-motives defined over $S.$
	A morphism $(f_{X}:X_1 \to X_2,f_G:G_1 \to G_2)$ from $M_1$ to $M_2$ is a morphism of complexes of abelian sheaves.
Since any arrow from $G$ to $X$ is zero, there are no homotopies between 1-motives to consider. 
In particular, \textit{1-motives form a category and not a 2-category}.
Denote by 
\[1-\mot (S)\]
 the category of 1-motives over $S$. It is an additive category but it is not an abelian category.

 An isogeny between two 1-motives
 $M_{1}=[u_1: X_{1} \to G_{1}]$ and
 $M_{2}=[u_2: X_{2} \to G_{2}]$ is a morphism $(f_{X},f_{G})$ such that
 $f_{X}:X_{1} \rightarrow X_{2}$ is injective with finite cokernel, and
 $f_{G}:G_{1} \rightarrow G_{2}$ is surjective with finite kernel. 

\section{Hodge-theoretic interpretation of sections of semi-abelian varieties}

	Let $S$ be a \emph{connected scheme smooth and of finite type over $\CC$.} Denote by $S^{\an}$ the associated complex analytic space. 

A variation of mixed Hodge structures $(\V_\ZZ,\W_*,\F^*)$ over $S^{\an}$ consists of a local system $\V_\ZZ$ of finitely generated $\ZZ$-modules over $S^{\an}$, an increasing weight filtration $\W_*$ of the local system $\V_\QQ = \V_\ZZ \otimes_\ZZ \QQ$ of $\QQ$-vector spaces by local sub-systems of $\QQ$-vector spaces, and a decreasing Hodge filtration $\F^*$ of the holomorphic vector bundle
$\V_{\cO_{S^\an}}=\V_{\ZZ} \otimes_\ZZ \cO_{S^\an}$ by holomorphic vector subbundles, such that 
\begin{enumerate}
	\item the  filtration $\F^*$ satisfies the transversality axiom, i.e. if $\nabla$ denotes the integrable canonical connection of the holomorphic vector bundle $\V_{\cO_{S^\an}} $ (see \cite[I Theorem 2.23]{D70}), we have that
	$\nabla \F^i ( \V_{\cO_{S^\an}}) \subset \Omega^1_{S^\an} \otimes_{\cO_{S^\an}} \F^{i-1}( \V_{\cO_{S^\an}})$, and
	\item over each point $s$ of $S^\an$, the fibre $(\V_\ZZ,\W_*,\F^*)_s$ is a mixed Hodge structure. 
\end{enumerate}
A morphism of variations of mixed Hodge structures is a morphism of local systems which respects the increasing weight filtration $\W_*$ and the  decreasing Hodge filtration $\F^*$ pointwise.
Denote by 
\[\vmhs (S)\]
 the category of variations of mixed Hodge structures 
over $S^\an$. It is a Tannakian category over $\QQ$.

A variation of mixed Hodge structures $(\V_\ZZ,\W_*,\F^*)$ over $S^{\an}$ is called
\begin{itemize}
	\item torsion-free if over each point $s$ of $S^\an,$ the fibre $(\V_\ZZ,\W_*,\F^*)_s$ is a torsion-free mixed Hodge structure (i.e. the underlying $\ZZ$-module $(\V_\ZZ)_s$ is torsion-free).
	\item graded-polarizable if for each integer $n,$ there exists a morphism of local systems of $\QQ$-vector spaces 
	$\beta_n: \Gr_n^\W (\V_\QQ) \otimes \Gr_n^\W (\V_\QQ) \rightarrow \QQ(-n) ,$
	which defines over each point $s$ of $S^\an$ a polarization of the fibre $(\Gr_n^\W (\V_\QQ),\W_*,\F^*)_s$ (i.e. over each point $s$ of $S^\an$, the real bilinear form 
	$(2 \pi i)^n \beta_s (-,I-)$ on $\Gr_n^\W (\V_\QQ) \otimes \RR$ is symmetric and positive-definite, with $I$ the image of $i$ via the isomorphism $ \mathrm{Res}_{\CC / \RR}(\GG_m)( \RR) \rightarrow \CC^*$).
	\item admissible if its restriction to any curve $C \subset S$ satisfies the usual admissibility conditions at infinity: after reducing to the unipotent local monodromy case,
	there exists a compactification $ C^\an \subset \overline{C^\an}$ such that $\Sigma :=\overline{C^\an} - C^\an $ is a divisor with normal crossings, and the conditions (1)–(2) of the original definition (see \cite[(3.13)]{SZ} or \cite{K} and \cite[ \S 3.2]{Gao}) are satisfied.
	
	
	
\end{itemize}

\par\noindent Denote by 
\[\vmhs^\ad (S)\]
the category of admissible variations of mixed Hodge structures 
over $S^\an$. It is a full Tannakian subcategory of the Tannakian category $\vmhs (S)$ (see \cite[Appendix A]{SZ}).

\medspace

We start by proving the \emph{Hodge-theoretic interpretation of (holomorphic versus algebraic) sections of tori}. In \cite[Theorem 3.2.1]{A25} André treated the abelian analogue. The toric and abelian proofs differ significantly.

\medspace

We first treat the case where the $S$-torus $T=\GG_m$
is split and the lattice $X=\ZZ$ is globally constant for the étale topology. The general case is then obtained in Proposition~\ref{prop:AndreG_m}(3) by passing to a finite étale covering of $S.$
Observe that $\GG_m(S) = \Hom_s(S,\GG_m) =\Gamma (S, \cO_S^*)$ and analytically $\GG_m^\an(S^\an)  =\Gamma (S^\an, \cO_{S^\an}^*).$ Denote by $\ZZ(1)= 2 \pi \ii \ZZ$ the constant variation of pure Hodge structure over $S^\an$ of type (-1,-1) associated to $T=\GG_m,$ and by $\ZZ_S$  the constant variation of pure Hodge structure over $S^\an$ of type (0,0) associated to $X=\ZZ.$ Over $S^\an$ there is the exponential exact sequence 
$0 \to \ZZ(1) \to \cO_{S^\an} \stackrel{\exp}{\to} \cO_{S^\an}^* \to 1 . $ Taking global sections gives the long exact sequence
\begin{equation}\label{eq:GlobalSections}
	0 \longrightarrow 2 \pi \ii \ZZ  \longrightarrow \Gamma (S^\an, \cO_{S^\an}) \stackrel{\exp}{\longrightarrow} \Gamma (S^\an, \cO_{S^\an}^*) \stackrel{\delta}{\longrightarrow} \HH^1(S^\an, \ZZ(1)) \longrightarrow \HH^1(S^\an, \cO_{S^\an}) 
\end{equation}
 Elements of $\HH^1(S^\an, \ZZ(1))$ parametrize extensions of $\ZZ$ by $\ZZ(1)$ as local systems:
\[\HH^1(S^\an, \ZZ(1)) \cong \Ext^1_{\locsys}(\ZZ_S,\ZZ(1) ) ,\]
while elements of $ \Gamma (S^\an, \cO_{S^\an}^*)$ parametrize extensions of $\ZZ$ by $\ZZ(1)$ as variations of mixed Hodge structures:
\[\GG_m^\an(S^\an) \cong  \Ext^1_{\vmhs(S)}(\ZZ_S,\ZZ(1) ) .\]
The connecting map $\delta$ is the map \emph{forgetting the Hodge structure} and we have the short exact sequence
\[ 0 \to \exp (\Gamma (S^\an, \cO_{S^\an}) ) \to \Ext^1_{\vmhs(S)}(\ZZ_S,\ZZ(1) ) \stackrel{\delta}{\longrightarrow} \Ext^1_{\locsys}(\ZZ_S,\ZZ(1))  .\]
By the normality Theorem of \cite[\S 5]{A}, the restriction $\delta_|$ of the connecting map $\delta$ to admissible variations of mixed Hodge structures gives rise to an injection
\begin{equation}\label{eq:Delta}
	\Ext^1_{{\vmhs}^\ad(S)}(\ZZ_S,\ZZ(1) ) \stackrel{\delta_|}{\longhookrightarrow} \HH^1(S^\an, \ZZ(1))
\end{equation}
A global algebraic unit section $\sigma \in \GG_m(S)  =\Gamma (S, \cO_S^*)$ furnishes an admissible variation of mixed Hodge structures in 
$\Ext^1_{{\vmhs}^\ad(S)}(\ZZ_S,\ZZ(1) ):$ in fact, to $\sigma$ is associated the 1-motive $M=[u:\ZZ \to \GG_m ], u(1) =\sigma $ and the Hodge realization of the short exact sequence $0 \to \W_{-1}(M) \to M \to \Gr_{0}(M) \to 0$ lies in
$\Ext^1_{{\vmhs}^\ad(S)}(\ZZ_S,\ZZ(1) )$ because of \cite[Lemma 5]{A}. Fiberwise the admissible variation of mixed Hodge structure in $\Ext^1_{{\vmhs}^\ad(S)}(\ZZ_S,\ZZ(1) )$  associated to the 1-motive $M_s=[u_s:\ZZ \to \GG_{m, s} ], u_s(1) =\sigma(s) $ splits only if $\sigma(s)$ is trivial. Hence one gets injections
\begin{equation} \label{eq:exsecTorus}
	\GG_m(S) \longhookrightarrow  \Ext^1_{{\vmhs}^\ad(S)}(\ZZ_S,\ZZ(1) ) \longhookrightarrow \HH^1(S^\an, \ZZ(1)) .
\end{equation}
In the statements $(1)$ and $(2)$ of the following Proposition, we still suppose $X=\ZZ$ and $T=\GG_m$.

\begin{proposition}\label{prop:AndreG_m} Let $S$ be a connected scheme smooth and of finite type over $\CC.$
	\par\noindent (1)	If $S$ is also proper over $\CC,$ then
\[\Ext^1_{{\vmhs}(S)}(\ZZ_S,\ZZ(1) )=\Ext^1_{{\vmhs}^\ad(S)}(\ZZ_S,\ZZ(1) )=0.\]
\par\noindent (2) In general, the inclusions \eqref{eq:exsecTorus} are isomorphisms \footnote{The first isomorphism is the enriched Hodge realization of the isomorphism \cite[(2.4.1)]{D89} }
	\[	\GG_m(S) \cong \Ext^1_{{\vmhs}^\ad(S)}(\ZZ_S,\ZZ(1) ) \cong \HH^1(S^\an, \ZZ(1))^{(0,0)}. \]
	\par\noindent (3) If $T$ is a 1-dimensional $S$-torus with cocharacter group $X_*(T)$ and $X$ is an $S$-group scheme which is locally for the étale topology a constant group scheme defined by a rank 1 free $\ZZ$-module, then
	\[T(S) \otimes \QQ \cong  \Ext^1_{{\vmhs}^\ad(S)}(X \otimes \QQ,X_*(T) \otimes \QQ(1) ) \cong \HH^1(S^\an, X_*(T) \otimes \QQ(1))^{(0,0)}  .\]
\end{proposition}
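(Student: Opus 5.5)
Everything is organized around the exact sequence \eqref{eq:GlobalSections} and the injections \eqref{eq:exsecTorus}. Since those maps are already injective, for (1) and (2) it suffices to show that every class in the relevant target comes from the source.

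For (1), let $S$ be proper. By GAGA, $\Gamma(S^\an,\cO_{S^\an})=\CC$ and $\Gamma(S^\an,\cO_{S^\an}^*)=\CC^*$, since $S$ is connected and proper. Moreover $\exp:\CC\to\CC^*$ is surjective, so $\exp(\Gamma(S^\an,\cO_{S^\an}))=\Gamma(S^\an,\cO^*_{S^\an})$. Hence in the short exact sequence for $\Ext^1_{\vmhs(S)}(\ZZ_S,\ZZ(1))$ the image of $\delta$ is the image of $\CC^*$ in $\HH^1(S^\an,\ZZ(1))$, and this image is zero. So $\Ext^1_{\vmhs(S)}(\ZZ_S,\ZZ(1))\cong\CC^*$ consists only of constant extensions. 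I must be careful here: a constant extension over $\CC$ is a nonzero class $\CC^*=\Ext^1_{MHS}(\ZZ,\ZZ(1))$, so the assertion that the group vanishes should be read modulo constant classes.

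To proceed, I would interpret $\Ext^1_{\vmhs(S)}$ as the kernel of $\delta$, i.e.\ as the variations whose underlying local-system extension is trivial. These are exactly the classes in $\exp\Gamma(S^\an,\cO_{S^\an})$, which is all of $\Gamma(S^\an,\cO^*_{S^\an})$ when $S$ is proper. Checking this normalization against the authors' conventions is the first point to settle. The admissible case follows by inclusion, because on a proper $S$ admissibility is automatic (there is no boundary).

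For (2), set $\HH^1(S^\an,\ZZ(1))^{(0,0)}$ to be the classes of Hodge type $(0,0)$ in Deligne's mixed Hodge structure on $\HH^1(S,\ZZ(1))$, namely $W_0\cap F^0$ intersected with the integral lattice. Since $\HH^1$ has weights $\ge1$, after twisting these are the integral classes lying in $F^1\HH^1(S,\CC)$ with no weight-$1$ component.

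First I would show that the composite injection $\GG_m(S)\hookrightarrow\HH^1(S^\an,\ZZ(1))$, $\sigma\mapsto \delta(\sigma)=[\frac{1}{2\pi\ii}\,d\log\sigma]$, lands in the $(0,0)$ part. This holds because $d\log\sigma$ is a logarithmic form on a good compactification $\bar S$, so its class lies in $\HH^0(\bar S,\Omega^1(\log D))$, which has type $(1,1)$ before the twist and $(0,0)$ after it.

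For surjectivity I take a class $c$ of type $(0,0)$. Its image in $\HH^1(\bar S,\ZZ(1))$ (through the weight-$1$ part) vanishes, so the residue sequence
\[0\to \HH^1(\bar S,\ZZ(1))\to \HH^1(S,\ZZ(1))\to \bigoplus_i \ZZ\cdot[D_i]\to \HH^2(\bar S,\ZZ(1))\]
lets $c$ come, up to torsion and a Hodge class in $\HH^1(\bar S)$ of type $(0,0)$ (necessarily zero), from a divisor $\sum n_iD_i$ that is homologous to zero. The key step is to pass from homologically trivial to principal. Since $W_1$-components vanish, I would use the Abel--Jacobi map to $\mathrm{Pic}^0(\bar S)$ together with the Hodge condition, which forces the class of the divisor to be trivial in $\mathrm{Pic}^0(\bar S)\otimes\QQ$. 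This follows Deligne's (2.4.1) in \cite{D89}, i.e.\ $\Gamma(S,\cO^*)\cong \mathrm{Hom}_{MHS}(\ZZ,\HH^1(S,\ZZ(1)))$. I expect this matching of $(0,0)$ classes with divisors that are rationally principal, and the removal of the remaining torsion so that the statement holds integrally, to be the main obstacle. I would address it using $\mathrm{NS}$ and the fact that the relevant torsion can be absorbed into roots of units. The middle group is then squeezed between two equal groups by injectivity.

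For (3), pick a finite étale Galois cover $S'\to S$ with group $\Gamma$ that trivializes $X$ and $T$. Over $S'$, part (2) gives $\Gamma$-equivariant isomorphisms for the twisted objects. Taking $\Gamma$-invariants after tensoring with $\QQ$ is exact, and it recovers $T(S)\otimes\QQ$, the admissible Ext over $S$, and $\HH^1(S^\an,X_*(T)\otimes\QQ(1))^{(0,0)}$. For this I use the Hochschild--Serre spectral sequence, whose higher terms are torsion, together with étale descent for admissible variations.
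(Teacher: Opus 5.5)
\paragraph{The gap: constant extensions.}
Your diagnosis in (1) is correct, and the paper does not get around it. For $S$ proper and connected, both Ext groups equal $\Gamma(S^{\an},\cO^*_{S^{\an}})=\CC^*$: these are the constant extensions, e.g.\ the realizations of $[\ZZ\to\GG_m,\ 1\mapsto c]$, which (as the paper itself notes) split only for $c=1$. The paper reaches $0$ only through the claimed injectivity of $\delta_|$ in \eqref{eq:Delta}, and that injectivity fails on exactly these classes. Your reading of $\Ext^1$ as $\ker\delta$ does not give vanishing either, so (1) holds only modulo constants.

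The genuine gap is that in (2) you then use the same false injectivity. You treat $\GG_m(S)\to\HH^1(S^{\an},\ZZ(1))$ as injective and squeeze the middle group, but $\delta$ kills $\CC^*\subset\GG_m(S)$. For $S=\spec\CC$ one already has $\CC^*$ versus $0$. So the second isomorphism in (2) is false as stated, and likewise in (3) for $T=\GG_m$, $X=\ZZ$, where $\CC^*\otimes\QQ\neq 0$.

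What your residue argument proves, once completed, is the exact sequence
\[
0\to\CC^*\to\GG_m(S)\to\HH^1(S^{\an},\ZZ(1))^{(0,0)}\to 0 .
\]
The true first isomorphism $\GG_m(S)\cong\Ext^1_{\vmhs^{\ad}(S)}(\ZZ_S,\ZZ(1))$ needs a separate input, which the paper also obtains only from the squeeze. An element of $\Ext^1_{\vmhs(S)}(\ZZ_S,\ZZ(1))$ is a unit $g\in\Gamma(S^{\an},\cO^*_{S^{\an}})$, and admissibility means exactly that $g$ is meromorphic along the boundary. Concretely, on a punctured disc write $g=t^nh$. In the canonical extension, $\F^0$ is spanned by a vector involving $\log h$, and its limit must exist and be a complement of $\W_{-2}$; this excludes poles and essential singularities of $\log h$. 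Hence $g$ is algebraic by GAGA on $\overline{S}$. Alternatively, show that $\ker\delta_|$ consists exactly of the constant extensions and that $\delta_|$ lands in the $(0,0)$-part, then apply the five lemma.

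\paragraph{Comparison with the paper and smaller points.}
Your surjectivity mechanism in (2) differs from the paper's and is the right one. A $(0,0)$-class with residue $D=\sum n_iD_i$ splits the pull-back along $\ZZ\cdot D$ of
\[
0\to\HH^1(\overline{S},\ZZ(1))\to\HH^1(S^{\an},\ZZ(1))\to\ker\mathrm{Cl}\to 0 .
\]
The extension class of this pull-back in $J(\HH^1(\overline{S},\ZZ(1)))=\mathrm{Pic}^0(\overline{S})$ is $\pm[\cO(D)]$, so $D$ is principal. The paper instead identifies the $(0,0)$-classes with all of $\ker\mathrm{Cl}$ and argues that $\mathrm{im}\,\phi$ is discrete, hence finite, hence zero. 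That argument fails: for $\overline{S}$ an elliptic curve and $\Sigma=\{P,Q\}$ with $P-Q$ of infinite order, $P-Q\in\ker\mathrm{Cl}$ is not principal. It is precisely the Hodge condition you use that excludes such divisors.

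Drop the torsion detour, however. The splitting is integral, so $[\cO(D)]=0$ on the nose. In any case a nonzero torsion class could never be absorbed by roots of unity, since constants have trivial divisor.

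Also, $\HH^0(\overline{S},\Omega^1_{\overline{S}}(\log\Sigma))=\F^1\HH^1(S^{\an},\CC)$ is not of type $(1,1)$: it contains weight-one classes of type $(1,0)$. What makes $[d\log\sigma]$ of type $(1,1)$ is integrality together with membership in $\F^1$.

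Your descent in (3) is the paper's. Besides the constants issue, the $\Gamma$-action there involves $X^\vee\otimes X_*(T)$. Taking invariants therefore produces $\Hom_S(X,T)\otimes\QQ$ and $\HH^1(S^{\an},X^\vee\otimes X_*(T)\otimes\QQ(1))^{(0,0)}$. These coincide with the outer groups in the statement only when $X$ is constant.
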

\begin{proof}
	(1) The properness of $S$ over $\CC$ and \cite[Exposé XII Corollaire 4.3]{SGA1} imply that algebraic and holomorphic global unit sections of $S$ coincide, that is $\GG_m(S)= \GG_m^\an(S^\an).$ Hence we get the first equality.
	Since $S$ is connected and proper over $\CC$ (analytically $S^\an$ is compact and connected), by
	Grothendieck's Finiteness Theorem (analytically by Liouville Theorem) its global sections are only the constants, i.e. $\Gamma(S,\cO_S)= \CC$ (analytically  $\Gamma(S^\an,\cO_{S^\an})= \CC$). In particular its global unit sections are the constants: $\GG_m(S)=\Gamma(S,\cO_S^*)= \CC^*$ (analytically $\GG_m^\an(S^\an)=\Gamma(S^\an,\cO_{S^\an}^*)= \CC^*$). Any constant $c \in \Gamma(S^\an,\cO_{S^\an}^*)= \CC^*$ has a global holomorphic logarithm, and so its class $\delta(c)$ in $\HH^1(S^\an, \ZZ(1))$ is zero. In other words the connecting map $\delta$ in the long exact sequence \eqref{eq:GlobalSections} is the zero map. Since its restriction $\delta_|$ \eqref{eq:Delta} is injective, we get the second equality. 

(2) By Hironaka, there exists a smooth compactification $\overline{S^\an}$ of $S^\an$ with boundary with normal crossings. Denote $j: S^\an \hookrightarrow \overline{S^\an}$ the open immersion and $\Sigma :=\overline{S^\an} - S^\an =\cup_{i \in I}D_i.$ Recall that we have the exact sequence
\[ 0 \longrightarrow \HH^1(\overline{S^\an}, \ZZ(1)) \stackrel{j^*}{\longrightarrow} \HH^1(S^\an, \ZZ(1)) \stackrel{\mathrm{Res}}{\longrightarrow} 
\oplus_{i \in I} \ZZ \stackrel{\mathrm{Cl}}{\longrightarrow} \HH^2(\overline{S^\an}, \ZZ(1)). \]
The piece $\Gr^W_{1} \HH^1(S^\an, \ZZ(1))$ of pure weight 1 consists of in $\HH^1(S^\an, \ZZ(1))$ with trivial residue and the piece  $\Gr^W_{0} \HH^1(S^\an, \ZZ(1))$ of pure weight 0, which is only of type (0,0), consists of residues lying in the kernel of the cycle class map $\mathrm{Cl}.$ We can see a class $(n_i)_i$ in $\HH^1(S^\an, \ZZ(1))^{(0,0)}$ as a weight 0 divisor with integral coefficients and support in $\Sigma$.

\par\noindent Since the class $\delta_| (\sigma)$ in $\HH^1(S^\an, \ZZ(1))$ of any global algebraic unit section $\sigma \in \GG_m(S)$ is of type (0,0), the target of the last inclusion \eqref{eq:exsecTorus} is $\HH^1(S^\an, \ZZ(1))^{(0,0)} $ ($\sigma \in \GG_m(S)$ is a function without zero and poles in $S$, that is it defines a divisor with integral coefficients whose support is in $\Sigma$). Note that if $\mathrm{Res}\delta_| (\sigma)=0$, $\sigma $ is a constant and $\delta_| (\sigma)$ is the class of the trivial extension of $\ZZ_S$ by $\ZZ(1)$ (the class $\delta_| (\sigma)$ lies in $\im (j^*)$ and we are in case (1)).
\par\noindent  The classes $(n_i)_i$ in $\HH^1(S^\an, \ZZ(1))^{(0,0)}$ form a free subgroup of 
$\oplus_{i \in I} \ZZ$ of finite rank because of the condition $\mathrm{Cl}(n_i)_i=0.$ Consider the exact sequence
\[ 0 \longrightarrow \GG_m(S) \stackrel{\mathrm{Div}}{\longrightarrow} \mathrm{Div}^0_\Sigma(\overline{S}) \stackrel{\phi}{\longrightarrow} \mathrm{Pic}^0(\overline{S}),\]
where $\mathrm{Div}^0_\Sigma(\overline{S})$ are weight 0 divisors with support in $\Sigma$. The image of $\phi$ is a discrete subgroup of the compact abelian variety $\mathrm{Pic}^0(\overline{S})$ and so it is finite. But any finite subgroup of  a free group of finite rank is trivial, that is $\phi=0$. In other words, any weight 0 divisors with support in $\Sigma$, that is any class  in $\HH^1(S^\an, \ZZ(1))^{(0,0)} ,$ comes from a global algebraic unit section $\sigma \in \GG_m(S)$\footnote{Consider a class $(n_i)_i$ in $\HH^1(S^\an, \ZZ(1))^{(0,0)}.$ The equality $\mathrm{Cl}(n_i)_i=0$ is a gluing condition and it is necessary for the existence of a global unit section $\sigma \in \GG_m(S):$ $\sigma$ extends to a well-defined meromorphic function on $\overline{S}$ with divisor supported in $\Sigma$. We have proved the converse: any meromorphic function on $\overline{S}$ with divisor supported in $\Sigma$ comes from a global unit section on $S$.	As pointed out by André in \cite[3.2.2 Remarks 1)]{A25} the condition of admissibility has to do with meromorphy at infinity rather than holomorphy.}. 

(3) Consider a finite étale Galois covering $p:S' \to S$ such that $T \times_S S'=\GG_m$ and $X \times_S S'=\ZZ.$ Denote by $\Gamma$ the Galois group $ \mathrm{Gal}(S'/S).$ For global sections we have the equality $T(S)=\GG_m(S')^\Gamma,$ and for cohomology classes we have the isomorphism $ \HH^1(S^\an, X_*(T) \otimes \QQ(1))^{(0,0)} \cong (\HH^1(S'^{\; \an}, \QQ(1))^{(0,0)})^\Gamma.$ Step (2) furnishes the $\Gamma$-equivariant isomorphism $\GG_m(S') \cong \HH^1(S'^{\; \an}, \ZZ(1))^{(0,0)}.$ Taking the $G$-invariants and tensorizing with $\QQ$ we get the isomorphisms
\[T(S) \otimes \QQ = \GG_m(S')^\Gamma \otimes \QQ \cong \big(\HH^1(S'^{\; \an}, \QQ(1))^{(0,0)}\big)^\Gamma \cong \HH^1(S^\an, X_*(T) \otimes \QQ(1))^{(0,0)}.\]
\end{proof}

Combining the abelian case, established by André in \cite[Theorem 3.2.1]{A25}, with the toric case, treated in Proposition \ref{prop:AndreG_m}, we now turn to the \emph{Hodge-theoretic interpretation of (holomorphic versus algebraic) sections of semi-abelian varieties}. 

\medspace

 Let $G$ be an extension of an abelian $S$-scheme by a 1-dimensional $S$-torus: $0 \to T \to  G \to A \to 0.$
We first treat the case where the torus underlying $G$
is split and the lattice is globally constant for the étale topology, namely $T=\GG_m$ and $X=\ZZ.$ The general case is then obtained in Theorem~\ref{Teo:AndreG} (2) by passing to a finite étale covering of $S.$  Let $(\V_\ZZ, \W_*,\F^*)$ be the torsion-free, graded-polarizable and admissible variation of mixed Hodge structure over $S^\an$ of type (-1,0), (0,-1) and (-1,-1) associated to $G$ (see \cite[Lemma  5]{A}). We have that $\W_{-2}(\V_\ZZ)$ is the constant variation of pure Hodge structure $\ZZ(1)= 2 \pi \ii \ZZ$ of type (-1,-1) associated to the torus $\GG_m,$ and $\Gr^\W_{-1}(\V_\ZZ)$ is the torsion-free, graded-polarizable variation of mixed Hodge structure of type (-1,0) and (0,-1) associated to the abelian scheme $A.$
Denote by $\ZZ_S$  the constant variation of pure Hodge structure of type (0,0) associated to the lattice $\ZZ.$ 
Note that $G(S)=\Hom_S(S,G)$ and $G^\an(S^\an)=\Hom_{S^\an}(S^\an,G^\an).$

Elements of $\HH^1(S^\an, \V_\ZZ)$ parametrize extensions of $\ZZ$ by $\V_\ZZ$ as local systems:
\[\HH^1(S^\an, \V_\ZZ) \cong \Ext^1_{\locsys}(\ZZ_S,\V_\ZZ ) ,\]
while elements of  $G^\an(S^\an)$ parametrize extensions of $\ZZ$ by $\V_\ZZ$ as variations of mixed Hodge structures:
\begin{equation} \label{eq:AnaliticSection}
	G^\an(S^\an) \cong \Ext^1_{\vmhs (S)}(\ZZ_S,\V_\ZZ ) .
\end{equation}
Consider the map \emph{forget the Hodge structure}
\[\Ext^1_{\vmhs(S)}(\ZZ_S,\V_\ZZ ) \to \Ext^1_{\locsys}(\ZZ_S,\V_\ZZ ) \cong  \HH^1(S^\an, \V_\ZZ) .\]
By the normality Theorem of \cite[\S 5]{A}, the restriction of this map to admissible variations of mixed Hodge structures gives rise to an injection
\[\Ext^1_{{\vmhs}^\ad(S)}(\ZZ_S,\V_\ZZ ) \longhookrightarrow \HH^1(S^\an, \V_\ZZ)^{(0,0)} .\]
A global algebraic section $g \in G(S)$ furnishes an admissible variation of mixed Hodge structures in 
$\Ext^1_{{\vmhs}^\ad(S)}(\ZZ_S,\V_\ZZ):$ in fact, to $g$ is associated the 1-motive $M=[u:\ZZ \to G ], u(1) =g,$  and the Hodge realization of the short exact sequence $0 \to \W_{-1}(M) \to M \to \Gr_{0}(M) \to 0$ lies in
$\Ext^1_{{\vmhs}^\ad(S)}(\ZZ_S,\V_\ZZ )$ because of \cite[Lemma 5]{A}. Since the class in $\HH^1(S^\an, \V_\ZZ)$ of any algebraic section is of type $(0,0)$ one gets the injections
\begin{equation}\label{eq:exsecG}
	 G(S) \longhookrightarrow  \Ext^1_{{\vmhs}^\ad(S)}(\ZZ_S,\V_\ZZ) \longhookrightarrow \HH^1(S^\an, \V_\ZZ)^{(0,0)} 
\end{equation}
(the injectivity on the left map can be checked fiberwise and it reduces to the fact that the admissible variation of mixed Hodge structure in $\Ext^1_{{\vmhs}^\ad(S)}(\ZZ_S,\V_\ZZ)$  associated to the 1-motive $M_s=[u_s:\ZZ \to G_s ], u_s(1) =g(s), $ splits only if $g(s)$ is trivial).

\medspace

An extension $G/S$ of an abelian $S$-scheme by a $S$-torus is said to have \emph{no fixed part, even after pull-back to a finite étale
covering of $S$}, if for every finite étale covering $	p:S'\to S$
one has	$	H^0(S'^{\mathrm{an}},p^\ast \V_{\mathbb Z})=0,	$
or equivalently	$(p^\ast \V_{\mathbb Z , s})^{\pi_1(S'^\an , s')}=0,	$ where  \(\V_{\mathbb Z}\) denotes the local system associated
with \(G\).
Geometrically, this is equivalent to requiring that \(G/S\) has no non-trivial
constant semi-abelian subvariety, even after pull-back to any finite étale
covering of \(S\).

\begin{theorem}\label{Teo:AndreG}
	Let $S$ be a connected scheme smooth and of finite type over $\CC.$ Let $G$ be an extension of an abelian $S$-scheme by a 1-dimensional $S$-torus $T$. 	Assume that $G/S$ has no fixed part, even after pull-back to a finite étale
	covering of $S.$
\par\noindent (1) Suppose $X = \ZZ$ and $T= \GG_m.$ If either

- $S$ is proper or contained in a proper variety with boundary of codimension $>1$,  or

- $S$ has a compactification with boundary with normal crossings and the local monodromies of $(\V_\QQ, \W_*,\F^*)$ at the boundary are unipotent,

\par\noindent the inclusions \eqref{eq:exsecG} are isomorphisms \footnote{The first isomorphism is the enriched Hodge realization of the isomorphism \cite[(2.4)]{D89}}
\[	G(S) \cong \Ext^1_{{\vmhs}^\ad(S)}(\ZZ_S,\V_\ZZ ) \cong \HH^1(S^\an, \V_\ZZ)^{(0,0)}. \]

\par\noindent (2) Let $X$ be an $S$-group scheme which is locally for the étale topology a constant group scheme defined by a rank 1 free $\ZZ$-module. Then
\[G(S) \otimes \QQ \cong  \Ext^1_{{\vmhs}^\ad(S)}(X \otimes \QQ, \V_\QQ ) \cong \HH^1(S^\an, \V_\QQ)^{(0,0)}  .\]
\end{theorem}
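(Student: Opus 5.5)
Since the two maps in \eqref{eq:exsecG} are already known to be injective, the plan is to prove that the composite $G(S)\to \HH^1(S^\an,\V_\ZZ)^{(0,0)}$ is surjective. I would do this with a dévissage along the weight filtration $0\to \ZZ(1)\to \V_\ZZ\to \Gr^\W_{-1}(\V_\ZZ)\to 0$, which is the Hodge realization of $0\to \GG_m\to G\to A\to 0$. This short exact sequence gives two compatible long exact sequences. The first is the algebraic one,
\[0\to \GG_m(S)\to G(S)\to A(S)\stackrel{\partial}{\to} \HH^1_{\mathrm{fppf}}(S,\GG_m)=\mathrm{Pic}(S).\]
The second is the cohomological one,
\[\HH^0(S^\an,\Gr^\W_{-1}\V_\ZZ)\to \HH^1(S^\an,\ZZ(1))\to \HH^1(S^\an,\V_\ZZ)\to \HH^1(S^\an,\Gr^\W_{-1}\V_\ZZ)\to \HH^2(S^\an,\ZZ(1)).\]
The no-fixed-part hypothesis gives $\HH^0(S^\an,\Gr^\W_{-1}\V_\ZZ)=0$. (If necessary, I would use the dual form $\HH^0$ of the abelian part, deduced from the fact that $G$ has no constant semi-abelian subvariety.) So the second map $\HH^1(S^\an,\ZZ(1))\to\HH^1(S^\an,\V_\ZZ)$ is injective. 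The Hodge realization maps the first sequence to the second, and the cycle map $\mathrm{Pic}(S)\to \HH^2(S^\an,\ZZ(1))$ is the Chern class. Taking $(0,0)$-parts is only left exact in general, but the morphisms are morphisms of mixed Hodge structures (Deligne's mixed Hodge theory on $\HH^1$ of admissible variations). So I would work with the Hodge classes in each term and check exactness at the relevant spots.

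Next, take a class $\xi\in \HH^1(S^\an,\V_\ZZ)^{(0,0)}$. Its image $\bar\xi\in \HH^1(S^\an,\Gr^\W_{-1}\V_\ZZ)^{(0,0)}$ is a Hodge class, and André's theorem \cite[Theorem 3.2.1]{A25} in the abelian case produces $a\in A(S)$ realizing $\bar\xi$, up to isogeny in general and integrally under the stated hypotheses. I would then show that $\partial(a)\in\mathrm{Pic}(S)$ vanishes. The Chern class of $\partial(a)$ is the image of $\bar\xi$ in $\HH^2(S^\an,\ZZ(1))$, and this image is zero because $\bar\xi$ lifts to $\xi$. Concretely, $\partial(a)$ is the line bundle $a^*$ of the $\GG_m$-torsor $G\to A$, i.e. $(a\times\hat v)^*\mathcal P$ in the biextension description from the introduction, so it lies in $\mathrm{Pic}^0$. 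Its Chern class vanishes, and what remains is to show it is actually trivial.

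This triviality is the main obstacle, because a line bundle with trivial Chern class can still be a nontrivial element of $\mathrm{Pic}^0$. I would handle it with the two hypotheses separately. In case (a), where $S$ is proper or has a boundary of codimension $>1$, I would use that $\mathrm{Pic}^0(S)$ is an abelian variety and that $\partial(a)$ is cut out by an element of $\HH^1(S,\mathcal O)$. This element is the $(0,1)$-component of the lifting $\xi$, and the vanishing of that component follows from $\xi$ being of type $(0,0)$, since a Hodge class cannot have an $F^0$-defect. In case (b), with unipotent monodromies, I would instead extend $G$ and the variation to Deligne's canonical extension over $\overline S$ and run the same argument on $\overline S$, where properness is available. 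Once $\partial(a)=0$, I lift $a$ to some $g_0\in G(S)$. Then $\xi-\mathrm{cl}(g_0)$ comes from $\HH^1(S^\an,\ZZ(1))$, and it is of type $(0,0)$ because the weight-graded pieces are strict. By Proposition \ref{prop:AndreG_m}(2), this difference equals $\mathrm{cl}(\sigma)$ for some $\sigma\in\GG_m(S)$, and $g=g_0\cdot\sigma$ realizes $\xi$. The injectivity established above then shows that all three groups coincide.

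For (2), I would copy the proof of Proposition \ref{prop:AndreG_m}(3). Choose a finite étale Galois cover $S'\to S$ with group $\Gamma$ that splits $T$ and $X$. After tensoring with $\QQ$, the isogeny ambiguities in André's theorem and the Pic-torsion issue both disappear, and the lifting argument above applies on $S'$ without needing the hypotheses of (1). The no-fixed-part assumption holds on $S'$ by hypothesis. Taking $\Gamma$-invariants, which is exact over $\QQ$, then gives $G(S)\otimes\QQ\cong \HH^1(S^\an,\V_\QQ)^{(0,0)}$, and the Ext group is sandwiched between these two.
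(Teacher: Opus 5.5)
Your skeleton is the paper's: the same ladder $\GG_m(S)\to G(S)\to A(S)$ over the corresponding $\Ext^1$ and $\HH^1(\cdot)^{(0,0)}$ rows, André's theorem for the abelian quotient, Proposition~\ref{prop:AndreG_m}(2) for the toric kernel. As in the paper, the whole difficulty sits in proving that the $\GG_m$-torsor $a^*G=\mathcal P_{a,\hat a}$ is algebraically trivial. You depart from the paper exactly at that step, and your argument there is only adequate for proper $S$. (In that case $\xi\in F^0$ already forces $\xi$ to vanish in $\HH^1(S^\an,\li G^\an)$, so it comes from a holomorphic section of $G^\an$, which is algebraic by GAGA.)

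In case (b), ``extend $G$ by Deligne's canonical extension and rerun the argument on $\overline S$'' is not an argument. The canonical extension is a vector bundle with logarithmic connection, not a group scheme, so it gives you no extension of $G$, of $a$, or of the torsor $a^*G$ to which the compact reasoning could apply. Moreover, what you must prove is vanishing in $\mathrm{Pic}(S)=\mathrm{Pic}(\overline S)/\langle [D_i]\rangle$, which Chern classes and analytic considerations on $S$ cannot detect. For instance, on $S=E\smallsetminus\{0\}$ with $E$ an elliptic curve, every line bundle is topologically and even analytically trivial ($S^\an$ is Stein), yet $\mathrm{Pic}(S)\cong E(\CC)$. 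The missing ingredient is a step turning ``$\xi$ is a Hodge class for the mixed Hodge structure on $\HH^1(S^\an,\V_\ZZ)$'' into control of $a^*G$ along the boundary, i.e.\ the meromorphy at infinity that admissibility encodes. The paper's route here is to transport the obstruction through the Poincaré biextension to the toric case and invoke Proposition~\ref{prop:AndreG_m}(2), where the boundary-divisor analysis is carried out. Your Chern-class/$\mathrm{Pic}^0$ argument needs a replacement of that kind.

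Part (2) contains a related error. Tensoring with $\QQ$ does not remove the obstruction you isolated in (1): a non-torsion class in $\mathrm{Pic}^0$ survives $\otimes\QQ$, so on $S'$ your lifting argument still needs (a) or (b). The paper chooses the Galois cover so that, besides splitting $T$ and $X$, the local monodromies of $p^*\V_\QQ$ become unipotent. A finite étale cover always achieves this: pull back a level-$N$ congruence subgroup, $N\ge 3$, of the monodromy group, in which quasi-unipotent elements are unipotent. Then (1)(b) applies on $S'$ after choosing a normal crossings compactification, and your $\Gamma$-invariants step over $\QQ$ goes through.

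Finally, the injectivity of $G(S)\to\HH^1(S^\an,\V_\ZZ)$ that you take from \eqref{eq:exsecG} and use at the end fails on constants. A constant $c\in\CC^*\subset\GG_m(S)\subset G(S)$ is the exponential of a constant section of $\li G$, so its class vanishes. This defect is inherited from the paper, but it means the comparison with $\HH^1(S^\an,\V_\ZZ)^{(0,0)}$ can at best hold modulo $T(\CC)$.
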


\begin{proof}
	(1) According to \cite[Theorem 3.2.1]{A25} and Proposition \ref{prop:AndreG_m} (2), we have the following commutative diagram with exact rows
			\[
			\begin{array}{ccccc}
				\GG_m(S)  & \rightarrow &G(S)  & \stackrel{\pi}{\rightarrow} & A(S)  \\[1mm]
				\cong &&\hookdownarrow&&\cong  \\[2mm]
				\Ext^1_{{\vmhs}^\ad(S)}(\ZZ_S,\ZZ(1) ) &\rightarrow& \Ext^1_{{\vmhs}^\ad(S)}(\ZZ_S,\V_\ZZ )&\stackrel{\pi_*}{\rightarrow}&  \Ext^1_{{\vmhs}^\ad(S)}(\ZZ_S,\Gr^\W_{-1}(\V_\ZZ) )  \\[1mm]
				\cong &&\hookdownarrow&&\cong \\[2mm]
				\HH^1(S^\an, \ZZ(1))^{(0,0)} &  \rightarrow & \HH^1(S^\an, \V_\ZZ)^{(0,0)} &\stackrel{\pi_*}{\rightarrow}  &  \HH^1(S^\an, \Gr^\W_{-1}(\V_\ZZ))^{(0,0)}\\
			\end{array}
			\]	
\par\noindent
Using the isomorphisms in the columns of this diagram, we identify the corresponding elements.

Let $x$ be a class in $\HH^1(S^\an,\V_\ZZ)^{(0,0)}$. If $\pi_*(x)=0,$ there exists a global algebraic section $t \in \GG_m(S) \subseteq G(S)$ whose class in $\HH^1(S^\an,\V_\ZZ)^{(0,0)}$ is $x.$ 
Suppose now $\pi_*(x) \not=0.$
Denote by $E$ an extension representing $x$. 
This extension fits into the following commutative diagram:
\[
\begin{array}{ccccccccc}
		& & 0 && 0 && 0 \\
	&  &\downarrow &  & \downarrow&& \downarrow \\[2mm]
	0 & \to & \W_{-2}(\V_\ZZ) & \to & \V_\ZZ & \to & \Gr_{-1}^W (\V_\ZZ) & \to & 0 \\
	& & \downarrow && \downarrow && \downarrow \\
	0 & \to & E'' & \to & E & \to & E' & \to & 0 \\
	& & \downarrow && \downarrow && \downarrow \\
	 &  & \mathbb{Z}_S & = & \mathbb{Z}_S & = & \mathbb{Z}_S &  &  \\
	& & \downarrow && \downarrow && \downarrow \\
	& & 0 && 0 && 0
\end{array}
\]
The extensions $E''$ and $E'$ belong to the category $\vmhs(S)$, which is closed under extensions. Hence $E$ also belongs to $\vmhs(S)$, and therefore $ x \in \Ext^1_{\vmhs(S)}(\ZZ_S,\V_\ZZ).$ By \eqref{eq:AnaliticSection} there exists then a global analytic section $g^\an \in	G^\an(S^\an)$ corresponding to the class $x.$

\par\noindent  Set $\pi_*(x) =a$ with $a \in A(S).$ To the global algebraic section $a$ is associated the 1-motive $[v:\ZZ \to A], v(1)=a,$ and the pull-back $a^*G$ of the extension $G$ via $a:S \to A.$ The $\GG_m$-torsor $a^*G$ is trivial if and only if the global algebraic section $a \in A(S)$ lifts to a global algebraic section $ \in G(S)$ living above $a$.
The Cartier dual of the extension $G$ is the 1-motive  $[\hat{v}:\ZZ \to \widehat{A}], \hat{v}(1)=\hat{a}, $ where $\widehat{A}$ is the Cartier dual of the abelian scheme $A$ and $\hat{a} $ is the global algebraic section of $\widehat{A}$ which parametrizes the extension $G.$   
 \par\noindent Let $\mathcal P$ the Poincaré biextension of $(A,\widehat{A})$ by $\GG_m$.
	The 
	$\GG_m$-torsor $a^*G$ is the fibre ${\mathcal P}_{a,\hat{a}}$ of $\mathcal P$ above the point $(a,\hat{a}) \in A \times \widehat{A}.$
	Having the global analytic section $g^\an \in	G^\an(S^\an)$ is equivalent to have an analytic trivialization $\psi^\an: \ZZ \times \ZZ \to (v \times \hat{v})^* {\mathcal P}$ of the pull-back $(v \times \hat{v})^* {\mathcal P}$ of the Poincaré biextension via $v \times \hat{v}.$ Being the pull-back $(v \times \hat{v})^* {\mathcal P} $ the trivial $\GG_m$-torsor $\ZZ \times  \ZZ \times \GG_m,$ we can see the trivialization $\psi^\an$ as a map from $\ZZ \times \ZZ$ to $\GG_m.$ But then, by Proposition \ref{prop:AndreG_m} (2), the trivialization $\psi^\an$ has an algebraic origin given by 
	an algebraic trivialization $\psi$ of the pull-back $(v \times v^*)^* {\mathcal P}:$ in other words, there exists a global algebraic section $t \in \GG_m(S)$ such that fiberwise, for any $s \in S,$ $\psi_s(1,1)$ defines a point $(1,1,t(s))$ in  $((v \times \hat{v})^* {\mathcal P}_s)_{1,1}$ which in turn furnishes a point $g(s)$ in ${\mathcal P}_{a(s),\hat{a}(s)}= a(s)^*G_s.$ Hence the $\GG_m$-torsor $a^*G$  is trivial, that is
	there exists a global algebraic section $g \in G(S) $ living above $a$, whose class in $\HH^1(S^\an, \W_{-1}(\V_\ZZ))^{(0,0)}$ is $x.$

(2) Consider a finite étale Galois covering $p:S' \rightarrow S $
such that $T \times_S S'=\GG_m, X \times_S S'=\ZZ,$ and the local
monodromies of $p^\ast \V_{\mathbb Q}$ at the boundary are unipotent. Let
$\Gamma =\operatorname{Gal}(S'/S),$ let \(s'\in S'^{\an}\) be a point above
\(s\in S^{\an}\), and set
$ \Pi :=\pi_1(S^{\an},s),
\Pi' :=\pi_1(S'^{\an},s'). $
We identify \(\Pi'\) with the kernel of the natural map \(\Pi\to \Gamma\). Set $G_{S'}=G\times_S S'.$
 For global sections we have the equality $G(S)= G_{S'}(S')^\Gamma.$ For cohomology classes the short exact sequence 
\[ 0 \longrightarrow \HH^1 \big(\Gamma, (p^\ast \V_{\mathbb Z , s})^{\pi_1(S'^\an , s')} \big) \longrightarrow \HH^1(\Pi,\V_{\ZZ ,  s}) \longrightarrow  \HH^1(\Pi', p^\ast \V_{\mathbb Z , s})^\Gamma \]
yields the inclusion 
$
\HH^1(\Pi,\V_{\ZZ, s}) \hookrightarrow \HH^1(\Pi', p^*\V_{\ZZ, s})^\Gamma,
$
since by hypothesis $G$ has no fixed part, even after pull-back to a finite étale cover of $S$ (recall that $\HH^1(\Pi,\V_{\ZZ, s})= \HH^1(S^\an, \V_\ZZ)$ and $\HH^1(\Pi', p^*\V_{\ZZ, s})=\HH^1(S'^\an, p^*\V_\ZZ)$ ).  Passing to the \((0,0)\)-parts, the inclusion still holds
\[
\HH^1(\Pi,\V_{\ZZ, s})^{(0,0)}
\longhookrightarrow
\bigl(\HH^1(\Pi', p^*\V_{\ZZ, s})^{(0,0)}\bigr)^\Gamma.
\]
On the covering \(S'\), the case (1) provides the isomorphism $G_{S'}(S') \cong \HH^1(\Pi', p^*\V_{\ZZ, s})^{(0,0)}.$
Taking \(\Gamma\)-invariants gives
$ G(S) \otimes_{\ZZ}\QQ= G_{S'}(S')^\Gamma \otimes_{\ZZ}\QQ \cong
\bigl( \HH^1(\Pi', p^*\V_{\QQ, s})^{0,0}\bigr)^\Gamma.$
Thus we obtain 
\[
G(S) \otimes_{\ZZ}\QQ
\longhookrightarrow
\HH^1(\Pi,\V_{\QQ, s})^{(0,0)}
\longhookrightarrow
\bigl( \HH^1(\Pi', p^*\V_{\QQ, s})^{0,0}\bigr)^\Gamma
\cong G(S) \otimes_{\ZZ}\QQ.
\]
The composition is the identity. Therefore the first arrow is an isomorphism.
\end{proof}

\section{The enriched Hodge realization functor}

	Let $S$ be a \emph{connected scheme smooth and of finite type over $\CC$.}
	
 To any 1-motive $M=[u:  X  \to  G]$ over $S,$ André associated in \cite[Lemma 5]{A25} a torsion-free, graded-polarizable and admissible variations of mixed Hodge structures $(\V_\ZZ,\W_*,\F^*)$ over $S^{\an}$ of type	(0,0),\,(-1,0),\,(0,-1),\,(-1,-1), defined as follows:
\begin{itemize}
	\item  the local system $\V_\ZZ$ of finitely generated free $\ZZ$-modules over $S^{\an}$ is the fibred product $ \li G \times_G X$ of $\li G$ and $X$ over $G,$
	where $\li G$ and $X$ are viewed over $G$ via the exponential map and the morphism $u$ respectively,
	\item the increasing weight filtration $\W_*$ of $\V_\QQ = \V_\ZZ \otimes_\ZZ \QQ$ by local sub-systems of $\QQ$-vector spaces is defined over $\ZZ$ by 
	\begin{eqnarray}
		\nonumber \W_0 (\V_\ZZ)&=&\V_\ZZ, \\
		\nonumber \W_{-1} (\V_\ZZ) &=& \ker \big(\exp: \li G \rightarrow G \big) = \R_1 f_*^{\an} {\ZZ}, \\
		\nonumber \W_{-2} (\V_\ZZ) &\cong& X_*(T) \otimes \ZZ(1),
	\end{eqnarray}
	where ${\R}_1f_*^{\an}{\ZZ}$ is the local system over $S^{\an}$ of the first homology group of the fibres of the structural morphism $f: G \rightarrow S$ of the extension $G$ underlying the 1-motive $M$. In particular, over each point $s$ of $S^\an$, the fibre $({\R}_1 f_*^{\an}{\ZZ})_s$ is isomorphic to $ \h_1(G_s,{\ZZ})$.
	\item the decreasing Hodge filtration $\F^*$ of $\V_{\cO_{S^\an}}=\V_{\ZZ} \otimes_\ZZ \cO_{S^\an}$ by holomorphic vector sub-bundles is defined by  $\F^0(\V_{\cO_{S^\an}}) = \ker \big( \alpha_{\cO_{S^\an}}: \V_{\cO_{S^\an}} \rightarrow \li G \big)$ where $ \alpha_{\cO_{S^\an}}$ is obtained from $\alpha: \V_\ZZ \rightarrow \li G$ by extending the scalars to $\cO_{S^\an}$.
\end{itemize}

 The \emph{Hodge realization} $\T_{\mathbb{Q}}(M)$ of the 1-motive $M$ is the local system $
\V_{\mathbb{Q}} = (\operatorname{Lie} G \times_G X)\otimes_{\mathbb{Z}} \mathbb{Q}$
of $\mathbb{Q}$-vector spaces over $S^{\mathrm{an}}$ underlying the variation of mixed Hodge structure
$ ({\V}_\ZZ, \W_*, \F^*),$
and \emph{its integral structure} $\T_{\mathbb{Z}}(M)$ is the local system $
\V_{\mathbb{Z}} = \operatorname{Lie} G \times_G X $
of finitely generated free $\mathbb{Z}$-modules over $S^{\mathrm{an}}.$ The \emph{enriched Hodge realization} $\T_{\h}(M)$ of $M$ is the full variation of mixed Hodge structure $ (\V_{\mathbb{Z}}, \W_*, \F^*)$
associated to $M,$ rather than merely its underlying local system $\V_{\mathbb{Q}}$. We thus obtain \emph{the enriched Hodge realization functor}
\[
\T_{\h} : 1-\mathrm{Mot}(S) \longrightarrow \mathrm{VMHS}^{\mathrm{ad}}(S) \subset \mathrm{VMHS}(S),
\qquad
M \longmapsto T_\h(M)
\]

Recall that in this paper extensions of abelian $S$-schemes by $S$-tori have no fixed part, even after pull-back to a finite étale
covering of $S.$

\begin{theorem}\label{VMHS1mot} 
	Let $S$ be a connected scheme smooth and of finite type over $\CC.$
	If either
	
	(a) $S$ is proper or contained in a proper variety with boundary of codimension $>1$,  or
	
	(b) $S$ has a compactification with boundary with normal crossings and the local monodromies of $(\V_\QQ, \W_*,\F^*)$ at the boundary are unipotent,

\par\noindent the enriched Hodge realization functor   
\[ M \longmapsto  {\T}_{\h}(M) \] 
 is an equivalence of categories between
	
	\begin{enumerate}
		\item the category of 1-motives over $S$, whose lattices are globally constant for the étale topology, and whose tori are split,
		\item the category of torsion-free, graded-polarizable and admissible variations of mixed Hodge structures $(\V_\ZZ,\W_*,\F^*)$ over $S^{\an}$ of type	(0,0),\,(-1,0),\,(0,-1),\,(-1,-1), such that $ \operatorname{Gr}^W_0(\V_{\mathbb Z})$ is a globally constant lattice and $ \operatorname{Gr}^W_{-2}(\V_{\mathbb Z}) $ is a globally constant variation of Hodge structures isomorphic to a direct sum of copies of $ \mathbb Z(1) .$
	\end{enumerate}

\par\noindent In general, namely without conditions (a) or (b) on $S$ and without the conditions on the lattices and the tori underlying 1-motives, the above equivalence holds only up to isogeny \footnote{This is due to the fact that, in the proofs of \cite[Theorem 3.2.1]{A25} and Theorem \ref{Teo:AndreG} (2) , the arguments are carried out at the integral level after passing to a finite étale covering of $S,$ and are then descended to $S$, yielding statements only up to isogeny.}.
\end{theorem}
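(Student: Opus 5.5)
We prove full faithfulness and essential surjectivity separately, following Deligne's strategy over $\CC$ and replacing the pointwise arguments by the relative results of Section 1.

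\textbf{Full faithfulness.} Let $M_1=[u_1:X_1\to G_1]$ and $M_2=[u_2:X_2\to G_2]$ be 1-motives in category (1). A morphism $\T_\h(M_1)\to\T_\h(M_2)$ respects $\W_*$, so it induces morphisms on the graded pieces.
\begin{itemize}
\item On $\Gr^\W_0$ and $\Gr^\W_{-2}$ the variations are constant. The induced maps are therefore maps of constant lattices $X_1\to X_2$ and of split tori $T_1\to T_2$.
\item On $\Gr^\W_{-1}$ we obtain a morphism of polarizable variations of type $(-1,0),(0,-1)$. By \cite[Rappel (4.4.3)]{D0} it comes from a unique morphism $A_1\to A_2$.
\end{itemize}
It remains to see that these data glue to a morphism $G_1\to G_2$ compatible with $u_1,u_2$. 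The obstruction to the gluing is an extension class, so we express each compatibility as an equality of classes:
\begin{itemize}
\item the two pushforwards/pullbacks of the extensions $G_i$ in $\Ext^1_{\vmhs^\ad}(\Gr_{-1},\W_{-2})$;
\item the two images of the sections $u_i$ in $\Ext^1_{\vmhs^\ad}(\ZZ_S,\W_{-1})$.
\end{itemize}
Theorem \ref{Teo:AndreG}(1) identifies these Ext groups with the groups of algebraic points of the relevant semi-abelian schemes. Applying it coordinatewise, after splitting $X$ and $T$ into copies of $\ZZ$ and $\GG_m$, turns the Hodge-theoretic equalities into equalities of algebraic morphisms. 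Faithfulness is immediate, since $\T_\ZZ$ is faithful.

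\textbf{Essential surjectivity.} Let $(\V_\ZZ,\W_*,\F^*)$ be as in (2). Write $\Gr^\W_0=\ZZ_S^r$, $\Gr^\W_{-2}\cong\ZZ(1)^t$, and $A$ for the abelian scheme with $\T_\h(A)=\Gr^\W_{-1}$ given by \cite{D0}. We build the 1-motive in two steps.
\begin{enumerate}
\item The admissible extension $\W_{-1}$ of $\Gr^\W_{-1}$ by $\ZZ(1)^t$ is, by Cartier duality, equivalent to $t$ admissible extensions of $\ZZ_S$ by $\Gr^\W_{-1}$ of the dual. By \cite[Theorem 3.2.1]{A25}, in its integral form under (a) or (b), these come from $t$ sections of $\hat A$. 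Hence they define an extension $G$ of $A$ by $\GG_m^t$ with $\T_\h(G)=\W_{-1}$.
\item The extension $\V_\ZZ$ of $\ZZ_S^r$ by $\W_{-1}$ is given by $r$ classes in $\Ext^1_{\vmhs^\ad(S)}(\ZZ_S,\W_{-1})$. Reducing to $1$-dimensional tori by pushing out along the projections $\GG_m^t\to\GG_m$, Theorem \ref{Teo:AndreG}(1) turns these classes into sections $g_1,\dots,g_r\in G(S)$.
\end{enumerate}
The 1-motive $M=[u:\ZZ^r\to G]$ with $u(e_i)=g_i$ then satisfies $\T_\h(M)\cong\V_\ZZ$.

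\textbf{Isogeny version.} In general we pass to a finite étale Galois cover $S'\to S$ that splits the lattice and the torus and makes the monodromies unipotent. There we apply the integral statement, then descend using $\Gamma$-invariants as in Proposition \ref{prop:AndreG_m}(3) and Theorem \ref{Teo:AndreG}(2). This yields the equivalence after $\otimes\QQ$. The objects descend only up to isogeny, because the $\Gamma$-invariant sections exist only rationally.

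\textbf{Main obstacle.} The delicate step is the reduction from a torus $\GG_m^t$ to the $1$-dimensional case of Theorem \ref{Teo:AndreG}. The pushouts $G_j$ of $G$ along the projections must still have no fixed part, which we expect to follow from the standing hypothesis on $G$. The algebraic sections obtained in each $G_j$ must also glue to a section of $G=G_1\times_A\cdots\times_A G_t$ lying over a single $a\in A(S)$. We plan to handle this by fixing $a$ first via \cite{A25} and then, for each $j$, trivializing the torsor $a^*G_j$ algebraically using Proposition \ref{prop:AndreG_m}(2). The fibre product of these trivializations is then a section of $G$ over $a$.
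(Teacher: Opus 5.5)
Your proposal is correct. Essential surjectivity follows the paper's proof, and full faithfulness takes a genuinely different route.

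\textbf{Essential surjectivity.} This is the paper's argument: $A$ from \cite[Rappel (4.4.3)]{D0}, $G$ from \cite[Theorem 3.2.1]{A25} plus Cartier duality, and $u$ from Theorem~\ref{Teo:AndreG}(1). You are more careful than the paper, which disposes of the reduction to $T=\GG_m$, $X=\ZZ$ with ``the category of extensions is additive''. The obstacle you flag is milder than you fear.
\begin{itemize}
\item The standing hypothesis is imposed on every extension of an abelian scheme by a torus, so it covers each pushout $G_j$ directly.
\item Let $x_j$ be the pushout of $x$ along the $j$-th projection. The sections $g_j\in G_j(S)$ that Theorem~\ref{Teo:AndreG}(1) attaches to the $x_j$ automatically lie over one $a$. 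Indeed, all $x_j$ have the same image in $\Ext^1_{\vmhs^\ad(S)}(\ZZ_S,\Gr^\W_{-1})$, and $A(S)$ injects there.
\item Write $\W^{(j)}_{-1}$ for the pushout of $\W_{-1}$ along $\ZZ(1)^t\to\ZZ(1)$. The glued section has class $x$ because $\Ext^1(\ZZ_S,\W_{-1})\to\prod_j\Ext^1(\ZZ_S,\W^{(j)}_{-1})$ is injective. Its kernel is a quotient of $\Hom(\ZZ_S,(\Gr^\W_{-1})^{t-1})$, which is $0$ by weights.
\end{itemize}

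\textbf{Full faithfulness.} The paper never looks at graded pieces. By d\'evissage it shows that $\uHom(M_1,M_2)$ is unramified over $S$, and deduces that holomorphic morphisms of the associated analytic 1-motives are algebraic. The d\'evissage goes through $\uHom(T_1,T_2)$ (via schematic density of torsion), then $\uHom(G_1,G_2)$ and $\uHom(M_1,M_2)$ (via the long exact $\uHom/\uExt$ sequences and \cite{B09-2}).

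You instead read off $f_X$ and $f_T$ from constancy, $f_A$ from \cite[Rappel (4.4.3)]{D0}, and glue by comparing extension classes. Put $V_i=\T_\h(M_i)$. Your route needs only \emph{injectivity} of two maps:
\begin{itemize}
\item $\widehat{A}_1(S)^{t_2}\to\Ext^1_{\vmhs^\ad(S)}(\Gr^\W_{-1}V_1,\W_{-2}V_2)$; this is Andr\'e's abelian case via Cartier duality, not Theorem~\ref{Teo:AndreG}.
\item $G_2(S)\to\Ext^1_{\vmhs^\ad(S)}(\ZZ_S,\W_{-1}V_2)$.
\end{itemize}
Both hold fibrewise by Deligne's equivalence over $\CC$. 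So this half of the theorem needs neither (a), (b), nor the no-fixed-part hypothesis, nor the surjectivity in Theorem~\ref{Teo:AndreG}(1) that you invoke. It also avoids the tacit identification of VMHS morphisms with holomorphic morphisms of analytic 1-motives, on which the paper's argument rests. The paper's rigidity argument, in turn, is local on $S$ and does not care whether lattices and tori are split.

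One step you leave implicit should be stated. Since $\Hom(\Gr^\W_{-1}V_1,\W_{-2}V_2)=\Hom(\Gr^\W_0V_1,\W_{-1}V_2)=0$ by weights, $\T_\h(f_G)$ agrees with the given morphism on $\W_{-1}$. This is needed before you compare the classes of the $u_i$. The same vanishing then shows that $\T_\h(f_X,f_G)$ equals the given morphism on all of $V_1$.
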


\begin{proof}
	(1) We first verify that the triple ${\T}_{\h}(M)=(\V_\ZZ,\W_*,\F^*)$ associated to $M$ is a torsion-free, graded-polarizable variation of mixed Hodge structures over $S^\an$ of type (0,0), (-1,0),(0,-1) or
	(-1,-1). Denote by $f_T: T \rightarrow S$ and $f_A: A \rightarrow S$  the structural morphisms of the torus $T$ and of the abelian scheme $A$ underlying the 1-motive $M$. Since  ${\R}_1 (f_T)_*^{\an}{\ZZ} \otimes_\ZZ \cO_{S^\an} \cong \li T  $, we have the equality $\W_{-2} (\V_\ZZ) \cap \F^0=0.$ Hence  $ (\W_{-2} (\V_\ZZ), \W_*, \F^*)$ is a torsion-free variation of mixed Hodge structures of type (-1,-1). Then we have an isomorphism between $\W_{-1} (\V_\ZZ) \cap \F^0$ and 
	$\ker \big( {\R}_1 (f_A)_*^{\an}{\ZZ} \otimes_\ZZ \cO_{S^\an} \rightarrow \li A \big)$, and so $\F^*$ induces on $\Gr_{-1}^\W(\V_\ZZ) \otimes_\ZZ \cO_{S^\an} \cong {\R}_1 (f_A)_*^{\an}{\ZZ} \otimes_\ZZ \cO_{S^\an} $ the Hodge filtration of ${\R}_1 (f_A)_*^{\an}{\ZZ} \otimes_\ZZ \cO_{S^\an} $. Therefore  $(\Gr_{-1}^\W(\V_\ZZ),  \W_*, \F^*)$  is a torsion-free, graded-polarizable variation of mixed Hodge structures of type (-1,0) and (0,-1). Moreover the short exact sequence $0 \rightarrow \W_{-1} (\V_\ZZ) \cap \F^0 \rightarrow \F^0 \rightarrow \Gr_0^\W(\V_\ZZ) \rightarrow 0$ implies that $(\Gr_{0}^\W(\V_\ZZ),  \W_*, \F^*)$ is a torsion-free variation of mixed Hodge structures of type (0,0). 
	Finally ${\T}_{\h}(M)=(\V_\ZZ,\W_*,\F^*)$ is admissible because of \cite[Lemma 5]{A}. This construction, which assigns to each 1-motive $M$ the variation of mixed Hodge structures ${\T}_{\h}(M)=(\V_\ZZ,\W_*,\F^*),$ is functorial.

	(2) Now we prove that the functor $M \mapsto {\T}_{\h}(M)$ is essentially surjective. Let $\V=(\V_\ZZ,\W_*,\F^*)$ be a torsion-free, graded-polarizable, admissible variation of mixed Hodge structures over $S^\an$ of type (0,0), (-1,0),(0,-1), (-1,-1) and denote by $M^\an=[u^\an: X^\an \to  G^\an]$ the corresponding analytic 1-motive. By \cite[Rappel (4.4.3)]{D0} the torsion-free, graded-polarizable variation of mixed Hodge structures of type (-1,0) and (0,-1) underlying $\V$ comes from a polarizable abelian scheme $A$ over $S$. Let $T$ be the $S$-torus whose character group $X^*(T)$ is the Cartier dual of $\Gr_{-2}^\W(\V_\ZZ)$ and let $X$ be the lattice $\Gr_{0}^\W(\V_\ZZ).$
 Since the category of extensions is additive, we may assume that $T=\GG_m$ is 1-dimensional and that $X= \ZZ $ has rank 1. 
	
\par\noindent	According to \cite[Theorem 3.2.1]{A25}, the dual of the torsion-free, graded-polarizable, admissible variation of mixed Hodge structures $\Gr_{-1}^\W(\V_\ZZ)$ of type (-1,0), (0,-1) and (-1,-1) underlying $\V$ has an algebraic origin given by a 1-motive $[\hat{v}: \ZZ \to  \widehat{A}],$ where $\widehat{A}$ is the Cartier dual of $A.$ The Cartier dual of the 1-motive 
	$[\hat{v}: \ZZ \to  \widehat{A}]$ is the extension $G$ of $A$ by $T$, whose complex analytic group scheme $G^\an$ is $\W_{-1}(\V_\ZZ)  \setminus \W_{-1}(\V_\ZZ \otimes_\ZZ \cO_{S^\an}) / \F^0 \cap \W_{-1}(\V_\ZZ \otimes_\ZZ \cO_{S^\an})$). 
	
	\par\noindent In order to show that also the map $u^\an: \ZZ \to  G^\an$ has an algebraic origin given by a map $u: \ZZ \to G$, we apply Theorem \ref{Teo:AndreG} (1).

	(3) Finally we check that the functor $M \mapsto {\T}_{\h}(M)$ is fully faithful. Let $M_i=(X_i,A_i,T_i,G_i,u_i)$ be two 1-motives defined over $S$ for $i=1,2.$ We will show that the $S$-sheaf $\uHom(M_1,M_2)$ is unramified over $S,$ which implies that its algebraic and holomorphic sections coincide.
	
	 We proceed by dévissage. We first prove that the 
	 $S$-sheaf $\uHom(T_1,T_2)$ is unramified over $S.$ It is enough to show that if $S = \spec (R)$ for a local Artin ring $R$ with residue field $k$ and if $f: T_1 \rightarrow T_2$ is an $S$-morphism whose restriction over ${\spec (k)}$ is trivial, then $f$ is trivial. Consider the restriction $f_{i}:T_1[q^i] \rightarrow T_2[q^i]$ of $f$ to the points of order $q^i$ with
	$q$ an integer bigger than 1 which is coprime with the characteristic of $k$, and $i>0$. For each $i$, $f_{i}$ is a morphism of finite and \'etale $S$-schemes which is trivial over ${\spec (k)}$, and so it is trivial.
	Since the family $(T_1[q^i])_i$ is schematically dense (see \cite[Expos\'e IX Theorem 4.7 and Remark 4.10]{SGA3}), we can conclude.
	
	The long exact sequence 
	\[0 \to \uHom(G_1,T_2) \to \uHom(G_1,G_2) \stackrel{\phi}{\to} \uHom(G_1,A_2) \to \uExt^1(G_1,T_2) \to  \cdots\]
	implies that $\uHom(G_1,G_2)$ is a $\uHom(G_1,T_2)$-torsor over $\im(\phi) \subseteq \uHom(G_1,A_2) .$
	By \cite[Proposition 1.4.1]{B09-2} the $S$-sheaf $\uHom(G_1,T_2)$ is a sub-sheaf of the unramified  $S$-sheaf $\uHom(T_1,T_2),$ and so it is unramified over $S$ too (the notion of unramified depends on stalks, see \cite[Theorem (17.4.1)]{EGAIV4}). The $\uHom(G_1,T_2)$-torsor $\uHom(G_1,G_2)$ is then unramified over its basis $\im(\phi).$
 According to \cite[Proposition 1.4.1]{B09-2} the $S$-sheaf $\uHom(G_1,A_2)$ is isomorphic to the unramified $S$-sheaf $\uHom(A_1,A_2)$ which implies that its $S$-sub-sheaf $\im(\phi)$ is unramified over $S$ too. By \cite[Theorem (17.3.3)]{EGAIV4}) it follows that $\uHom(G_1,G_2)$ is unramified over $S.$\footnote{We could have considered the long exact sequence 
 	$0 \to \uHom(A_1,G_2) \to \uHom(G_1,G_2) \to  \uHom(T_1,G_2) \to \uExt^1(A_1,G_2) \to  \cdots$
 	and the facts that $\uHom(A_1,G_2)$ is a sub-sheaf of the unramified sheaf $\uHom(A_1,A_2)$ and $ \uHom(T_1,G_2)$ is isomorphic to the unramified $S$-sheaf $\uHom(T_1,T_2)$ (see \cite[Proposition 1.4.1]{B09-2}).}
 
 By \cite[Proposition 1.1.3]{B09-2} the $S$-sheaf $\uHom(G_1,X_2[1])$ is trivial, which implies that the $S$-sheaves $\uHom(G_1,G_2)$ and $\uHom(G_1,M_2)$ are isomorphic. Therefore $\uHom(G_1,M_2)$ is unramified over $S.$
 The triviality of the $S$-sheaf $\uHom(X_1[1],G_2)$ implies that the $S$-sheaf $\uHom(X_1[1],M_2)$ is a sub-sheaf of the étale $S$-sheaf $\uHom(X_1,X_2),$ hence it is étale over $S$ too.\footnote{In a similar way we show that $\uHom(M_1,G_2)$ is a sub-sheaf of the unramified $S$-sheaf $\uHom(G_1,G_2)$ and so it is unramified too, and $\uHom(M_1,X_2[1])$  is isomorphic to the étale $S$-sheaf $\uHom(X_1,X_2),$ hence it is étale too.}
 
 	The long exact sequence 
 \[0 \to \uHom(X_1[1],M_2) \to \uHom(M_1,M_2) \stackrel{\varphi}{\to}  \uHom(G_1,M_2) \to \uExt^1(X_1[1],M_2) \to  \cdots\]
 implies that $\uHom(M_1,M_2)$ is a $\uHom(X_1[1],M_2)$-torsor over $\im (\varphi) \subseteq  \uHom(G_1,M_2),$ hence
  it is unramified (even étale) over its basis $\im (\varphi).$
 Since  $\uHom(G_1,M_2)$ is unramified over $S,$ its $S$-sub-sheaf $\im (\varphi)$ is unramified over $S$ too. Therefore by \cite[Theorem (17.3.3)]{EGAIV4}) $\uHom(M_1,M_2)$ is unramified over $S.$\footnote{We could have considered the long exact sequence 
 	$0 \to \uHom(M_1,G_2) \to \uHom(M_1,M_2) \to  \uHom(M_1,X_2[2]) \to \uExt^1(M_1,G_2) \to  \cdots$
 	and the facts that $\uHom(M_1,G_2)$ is a sub sheaf of the unramified sheaf $\uHom(G_1,G_2)$ and $\uHom(M_1,X_2[2])$ is isomorphic to the étale $S$-sheaf $\uHom(X_1,X_2)$.}
\end{proof}

 By Theorem \ref{VMHS1mot} the enriched Hodge realization functor
\[
\T_{\h} : 1-\mathrm{Mot}(S) \longrightarrow \mathrm{VMHS}^{\mathrm{ad}}(S) \subset \mathrm{VMHS}(S),
\qquad
M \longmapsto T_\h(M)
\]
is fully faithful, that is
\[
\operatorname{Hom}_{1\text{-}\mathrm{Mot}(S)}(M_1,M_2)
\cong
\operatorname{Hom}_{\mathrm{VMHS}^{\mathrm{ad}}(S)}
\bigl(\T_\h(M_1),\T_\h(M_2)\bigr).
\]

\begin{definition}\label{def:MS(S)}
	Let $S$ be a connected scheme smooth and of finite type over a field $k$ of characteristic 0. The Tannakian category $\vmhs^\ad(S)$ of admissible variations of mixed Hodge structures over $S$ is the inductive limit 
	\[\vmhs^\ad(S) := \varinjlim \vmhs^\ad(S') \] 
	of the Tannakian categories $\vmhs^\ad(S')$,
	indexed by all connected schemes $S',$ which are smooth and of finite type over a field $k'$ of finite type over $\QQ$, endowed with an arrow $k' \to k$ and with an isomorphism $S \cong S' \times_{\spec k'} \spec k$.
	In order to be sure that $k$ embeds into $\CC$, we assume that the cardinality of $k'$ is smaller or equal to continuum.
\end{definition}

\pn Via the inductive limit we can extend the notion of enriched Hodge realization functor to 1-motives over a $k$-scheme $S$ with $k$ a field of characteristic 0 and thereby introduce the following notion

\begin{definition}
	Let $M$ be a 1-motive defined over a connected scheme $S$ which is smooth and of finite type over a field $k$ of characteristic 0. The \emph{global Mumford-Tate group} of $M,$ denoted $ \mathrm{G}_{\vmhs^\ad (S)}(M),$ is the fundamental group of the Tannakian subcategory $<{\T}_{\h}(M)>^\otimes$ of $\vmhs^\ad (S)$ generated by the enriched Hodge realization ${\T}_{\h}(M)=(\V_{\ZZ},\W_*,\F^*)$ of $M$.
\end{definition}

Fix a point $s$ of $S$. The category of local systems $\locsys$ 
forms a Tannakian category with fibre functor given by \emph{taking the fibre at the base point $s$}. Its fundamental group is
the topological fundamental group $\pi_1(S,s).$ 
The restriction of the functor \emph{forget the Hodge structure} 
$\vmhs^\ad (S) \rightarrow \locsys $ 
to the Tannakian subcategories $<{\T}_{\h}(M)>^\otimes$ generated by the enriched Hodge realization of a 1-motive $M$ over $S$ induces an essentially surjective functor between the corresponding Tannakian categories 
$<{\T}_{\h}(M)>^\otimes \to < \V_\ZZ>^\otimes .$
By Tannakian duality, we get an inclusion of algebraic groups 
\[G_{\mathrm{mon}}(\V_{\ZZ, s}) \longhookrightarrow \mathrm{G}_{\vmhs^\ad (S)}(M),\]
 where 
 $G_{\mathrm{mon}}(\V_{\ZZ, s})$ is the fundamental group of the Tannakian subcategory $<\V_\ZZ >^\otimes$ of $\locsys$ generated by $\V_\ZZ,$ that is the algebraic monodromy group of the local system $\V_\ZZ.$

\begin{proposition}\label{MT}
Let $M$ be a 1-motive defined over a connected scheme $S$ which is smooth and of finite type over a field $k$ of characteristic 0.
 The neutral connected component of the global Mumford-Tate group $\mathrm{G}_{\vmhs^\ad (S)}(M)$ of $M$ identifies with the Mumford-Tate group of the fiber $M_{s_0}$ at the generic point $s_0 $ of $ S$:
	\[	\mathrm{G}_{\vmhs^\ad (S)}(M)^\circ = \mathrm{MT}(M_{s_0}).	\]
\end{proposition}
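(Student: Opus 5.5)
We reduce to $k=\CC$ using Definition \ref{def:MS(S)}: $M$ and $S$ descend to some $S'$ over a field $k'$ of finite type over $\QQ$, and we embed $k'\subset\CC$. The global Mumford--Tate group and $\MT(M_{s_0})$ do not depend on the choice of model or embedding. For the generic fiber, we interpret $\MT(M_{s_0})$ as the Mumford--Tate group of $\T_\h(M)_s$ at a very general point $s\in S^\an$, i.e. outside a countable union of proper analytic subvarieties (Noether--Lefschetz type locus). At such a point, all Hodge classes of all tensor constructions of $\V_s$ extend to global sections of the corresponding local systems, after a finite étale cover.

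The structure of the argument is the standard André--Deligne picture. There is a fibre functor on $<\T_\h(M)>^\otimes$ given by $\V\mapsto \V_{\QQ,s}$, and this fibre functor is used to compute $\mathrm{G}_{\vmhs^\ad (S)}(M)$. The subcategory of constant variations in $<\T_\h(M)>^\otimes$ gives a quotient $\mathrm{G}_{\vmhs^\ad (S)}(M)\twoheadrightarrow H$. The fibre restriction $<\T_\h(M)>^\otimes\to<\T_\h(M)_s>^\otimes$ gives an inclusion $\MT(\T_\h(M)_s)\hookrightarrow \mathrm{G}_{\vmhs^\ad (S)}(M)$. With the inclusion $G_{\mathrm{mon}}(\V_{\ZZ,s})\hookrightarrow\mathrm{G}_{\vmhs^\ad (S)}(M)$ already established, the key steps are the following.

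(i) By the theorem of the fixed part for admissible variations (Steenbrink--Zucker, Kashiwara), for every object $\mathcal W$ of $<\T_\h(M)>^\otimes$ the invariants $\mathcal W_s^{\pi_1}$ underlie a constant sub-variation, and global sections of variations are exactly the invariants that are Hodge at $s$. Hence a line of $\mathcal W_s$ is fixed by $\mathrm{G}_{\vmhs^\ad (S)}(M)$ if and only if it is both monodromy-invariant and Hodge of type $(0,0)$ at $s$.

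(ii) Deduce that $\mathrm{G}_{\vmhs^\ad (S)}(M)$ is generated by $\MT(\T_\h(M)_s)$ and $G_{\mathrm{mon}}(\V_{\ZZ,s})$. By Chevalley, both groups are determined by their fixed lines in tensor constructions, and by (i) these fixed lines are exactly the common fixed lines of the two subgroups.

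(iii) Show that the connected monodromy group $G_{\mathrm{mon}}^\circ$ is normal in $\MT(\T_\h(M)_s)$ at a very general $s$. This is André's normality theorem \cite[\S 5]{A}, already used for \eqref{eq:Delta}, for admissible graded-polarizable variations. Then $\mathrm{G}_{\vmhs^\ad (S)}(M)^\circ=\MT(\T_\h(M)_s)\cdot G_{\mathrm{mon}}^\circ=\MT(\T_\h(M)_s)$. The extra components come only from the finite group $G_{\mathrm{mon}}/G_{\mathrm{mon}}^\circ$, which becomes trivial after a finite étale cover. This also explains why only the neutral component is claimed.

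The main obstacle is step (i) in the mixed setting. The fixed part theorem requires admissibility, which holds by \cite[Lemma 5]{A}, and graded-polarizability, which holds by Theorem \ref{VMHS1mot}(1). One must also ensure it applies to all tensor constructions, including duals, so that Chevalley's theorem can be used. The comparison between "generic point" and "very general complex point" also needs care: I would handle it by spreading out Hodge loci, using that they are algebraic by Cattani--Deligne--Kaplan in the admissible setting, so that their complement is nonempty and contains the image of the generic point under any embedding of its residue field in $\CC$.
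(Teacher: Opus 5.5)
Your route uses the same two inputs as the paper's proof: the theorem of the fixed part, and André's theorem that $G_{\mathrm{mon}}^\circ\subseteq\MT(\V_s)$ at a Hodge-generic point. You package them group-theoretically (``$G=\mathrm{G}_{\vmhs^\ad(S)}(M)$ is generated by $\MT(\V_s)$ and $G_{\mathrm{mon}}$''), whereas the paper phrases it as ``Hodge tensors at $s_0$ are exactly the global tensors''.

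However, the equality $G^\circ=\MT(\V_s)\cdot G_{\mathrm{mon}}^\circ$ in (iii) does not follow from (ii) together with $G_{\mathrm{mon}}^\circ\trianglelefteq\MT(\V_s)$. The algebraic group generated by a connected group $H$ and a group $K$ with $K^\circ\subseteq H$ can be a strictly larger connected group: a maximal torus of $\mathrm{SL}_2$ and a generic element of finite order generate $\mathrm{SL}_2$. What is missing is that $\pi_1(S^\an,s)$ normalizes $\MT(\V_s)$ when $s$ is Hodge-generic. To see this, take a Hodge class (of any type $(p,p)$) at $s$. Its flat continuation has a Hodge locus in the universal cover that is a closed analytic subset containing a lift of a Hodge-generic point, hence is everything. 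So monodromy sends Hodge classes in tensor constructions of $\V_s$ to Hodge classes, and therefore normalizes $\MT(\V_s)$. With this, $G=\MT(\V_s)\cdot G_{\mathrm{mon}}$, the quotient $G/\MT(\V_s)$ is a quotient of $\pi_0(G_{\mathrm{mon}})$, and $G^\circ=\MT(\V_s)$. Your remark about passing to a finite étale cover $S'$ does not avoid this point. It gives $\mathrm{G}_{\vmhs^\ad(S')}(M_{S'})=\MT(\V_s)$, but identifying that group with $G^\circ$ requires knowing it has finite index in $G$, which is the same normalization statement.

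Two smaller points. First, in (ii) Chevalley's theorem presents $\langle\MT(\V_s),G_{\mathrm{mon}}\rangle$ as the stabilizer of a line on which it acts by a character, not as the fixer of vectors. Since these groups are not reductive, invariant vectors alone do not determine them: a Borel subgroup of $\mathrm{GL}_2$ has the same invariants as $\mathrm{GL}_2$ in every representation. So (i) must be proved for stable lines. Let $\ell\subset T(\V_{\QQ,s})$ be $\pi_1$-stable and a sub-MHS. Then $\ell$ has type $(p,p)$, and $\pi_1$ acts on $\ell\cap T(\V_{\ZZ,s})$ by $\pm1$. Hence for a generator $v$, the tensor $v\otimes v$ is a global flat section of type $(2p,2p)$ at $s$, and so at every point by the fixed part theorem. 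It follows that $L^{\otimes 2}$ is a sub-variation, and hence so is the local system $L$ spanned by $\ell$. Second, in your reduction, algebraicity of Hodge loci does not by itself ensure that the complex point attached to an arbitrary embedding of the function field avoids them. (For admissible mixed variations, algebraicity is due to Brosnan--Pearlstein--Schnell rather than Cattani--Deligne--Kaplan.) You need their components to be defined over $\overline{k'}$, for instance via the absoluteness of Hodge classes on 1-motives.
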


\begin{proof} Let $s$ be a point of $S$.
	A tensor in a tensor construction on the mixed Hodge structure $V_s$ is fixed by the algebraic monodromy group $G_{\mathrm{mon}}(\V_{\ZZ, s})$
	if and only if it is invariant under monodromy.
	By the theorem of the fixed part, such tensors arise from constant subobjects of the admissible variation $\V_\ZZ$ of mixed Hodge structures, hence are independent of the point $s$ of $S$.

	The algebraic monodromy group is contained
	in the Mumford-Tate group $\MT(M_{s_0})$, hence the Hodge tensors of the mixed Hodge structure
	$\V_{\ZZ, s_0}$ are invariant under monodromy. In particular they coincide with the global (i.e. constant) tensors. Therefore,
	the Mumford-Tate group $\mathrm{MT}(M_{s_0})$ is the stabilizer of the Hodge tensors,
	which coincide with the global tensor invariants defining  $\mathrm{G}_{\vmhs^\ad (S)}(M)^\circ .$ This proves the proposition.
\end{proof}

\begin{remark}\label{remMT}
	Since the Mumford-Tate group $\mathrm{MT}(M_{s_0})$ is connected, while the global Mumford-Tate group $	\mathrm{G}_{\vmhs^\ad (S)}(M)$
	may have non-trivial discrete components arising from monodromy, the comparison
	holds at the level of the neutral connected component.
\end{remark}


\bibliographystyle{plain}

\end{document}